\numberwithin{theorem}{section}
\numberwithin{corollary}{section}
\numberwithin{lemma}{section}
\numberwithin{remark}{section}
\numberwithin{definition}{section}
\newcommand{\ud}{\mathrm{d}}
\newtheorem{assum}{Assumption~}
\begin{document}

\title{Derivative-free global minimization for a class of multiple minima problems}

\titlerunning{Derivative-free global minimization}        

\author{Xiaopeng Luo \and Xin Xu \and Daoyi Dong}


\institute{\Letter~Xin Xu \\
\email{xu.permanent@gmail.com} \\
\at Department of Control and Systems Engineering, School of Management and Engineering, Nanjing University, Nanjing, 210008, China \\
Department of Chemistry, Princeton University, Princeton, NJ 08544, USA
\and
Xiaopeng Luo \\
\email{luo.permanent@gmail.com} \\
\at Department of Control and Systems Engineering, School of Management and Engineering, Nanjing University, Nanjing, 210008, China \\
Department of Chemistry, Princeton University, Princeton, NJ 08544, USA
\and
Daoyi Dong \\
\email{daoyidong@gmail.com} \\
\at School of Engineering and Information Technology, University of New South Wales, Canberra, 2600, Australia
}

\date{}

\maketitle

\begin{abstract}
  We prove that the finite-difference based derivative-free descent (FD-DFD) methods have a capability to find the global minima for a class of multiple minima problems. Our main result shows that, for a class of multiple minima objectives that is extended from strongly convex functions with Lipschitz-continuous gradients, the iterates of FD-DFD converge to the global minimizer $x_*$ with the linear convergence $\|x_{k+1}-x_*\|_2^2\leqslant\rho^k \|x_1-x_*\|_2^2$ for a fixed $0<\rho<1$ and any initial iteration $x_1\in\mathbb{R}^d$ when the parameters are properly selected. Since the per-iteration cost, i.e., the number of function evaluations, is fixed and almost independent of the dimension $d$, the FD-DFD algorithm has a complexity bound $\mathcal{O}(\log\frac{1}{\epsilon})$ for finding a point $x$ such that the optimality gap $\|x-x_*\|_2^2$ is less than $\epsilon>0$. Numerical experiments in various dimensions from $5$ to $500$ demonstrate the benefits of the FD-DFD method.

\keywords{multiple minima problem \and global minima \and nonconvex \and derivative-free \and convergence rate \and complexity}
\subclass{65K05 \and 68Q25 \and 90C26 \and 90C56}
\end{abstract}

\section{Introduction}
\label{DFD:s1}

Derivative-free descent (DFD) methods, also known as zero-order methods \cite{DuchiJ2015A_ZeroOrderCO,ShamirO2017A_ZeroOrderConvex} in the literature or bandit optimization in the machine learning literature \cite{HazanE2014A_BanditOptimization,ShamirO2017A_ZeroOrderConvex}, do not require the availability of derivatives. They are generally applied to instances where derivatives are unavailable or unreliable \cite{ConnA2009M_DerivativeFree,PoliakB1987M_Optimization}. DFD methods do not depend directly on gradient information, but some of them are related to gradient estimates, e.g., the finite-difference based derivative-free descent (FD-DFD) methods \cite{NesterovY2017A_GF,PoliakB1987M_Optimization}.

The FD-DFD method could be regarded as a smoothed extension of the gradient method because an FD-DFD descent direction is an unbiased estimate of smoothed gradient at the current iterate \cite{NemirovskiA1983M_optimization,NesterovY2017A_GF}. It also has some characteristics of the gradient method \cite{NesterovY2017A_GF} but its ``cognitive range'' is closely related to the smoothing parameter. One may expect that the FD-DFD method has certain global search capability when the smoothing and stepsize parameters are properly selected. Therefore, we attempt to analyze the convergence behavior of the FD-DFD method in global optimization under certain conditions, regardless of whether the derivative is available.

Recently, a regularized asymptotic descent (RAD) method \cite{LuoX2020A_RAD} was proposed to find the global minima with linear convergence and logarithmic work complexity for certain class of multiple minima problems. It is inspired by an asymptotic solution of the regularized minimization problems which is extended from the Pincus asymptotic solution formula \cite{PincusM1968A_AsymptoticSolution,PincusM1970A_AsymptoticSolution}. Under a mild assumption, the RAD iterates will converge to the global minimizer without being trapped in saddle points, local minima, or even discontinuities.  In this work, we will prove that the FD-DFD method also enjoys a similar convergence behavior under the same assumption.

We will see that this global linear convergence comes from the smoothing effect on objective gradients. Smoothing techniques have been extensively studied \cite{BurkeJ2020A_Max,DuchiJ2012A_SGsmoothing,NesterovY2017A_GF}, especially for nonsmooth objectives
\cite{ChenX2012A_Smoothing,NesterovY2005A_Smooth}. One may notice that a smoothing method is different from a random noise based approach, e.g., random search method \cite{BergstraJ2012M_RS_HyperParameter} or perturbed gradient method \cite{JinC2017A_EscapeSaddlePoints,PemantleR1990A_SaddlePoints}. The reason is that a smoothed gradient contains certain global information but a random noise is independent of the objective.

Specifically, we analyze the FD-DFD method for finding the global minima
\begin{equation}\label{RAD:eq:P}
  x_*=\arg\min_{x\in\mathbb{R}^d}f(x),
\end{equation}
where the objective function $f:\mathbb{R}^d\to\mathbb{R}$ satisfies the following assumption:
\begin{assum}\label{DFD:ass:A}
The objective function $f:\mathbb{R}^d\to\mathbb{R}$ satisfies there exist $x_*\in\mathbb{R}^d$ and $0<l\leqslant L<\infty$ such that for all $x\in\mathbb{R}^d$,
\begin{equation}\label{DFD:eq:A}
  f_*+\frac{l}{2}\|x-x_*\|_2^2\leqslant f(x)
    \leqslant f_*+\frac{L}{2}\|x-x_*\|_2^2.
\end{equation}
Hence, $f$ has a unique global minimizer $x_*$ with $f_*:=f(x_*)$.
\end{assum}

\begin{figure}[!h]
\centering
\includegraphics[width=0.4\textwidth]{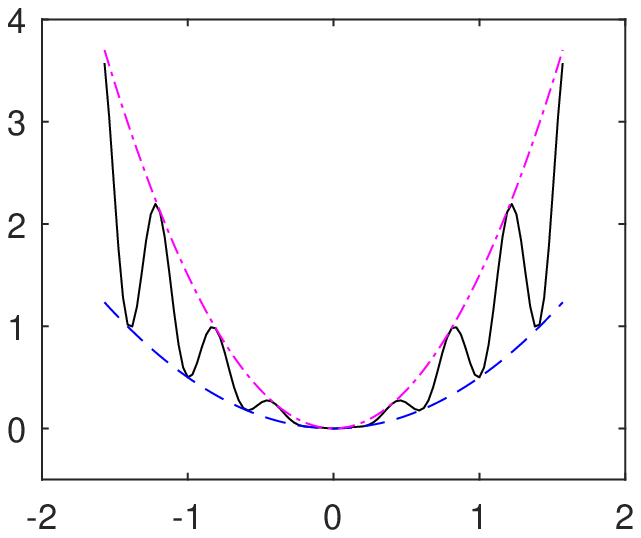}
\includegraphics[width=0.4\textwidth]{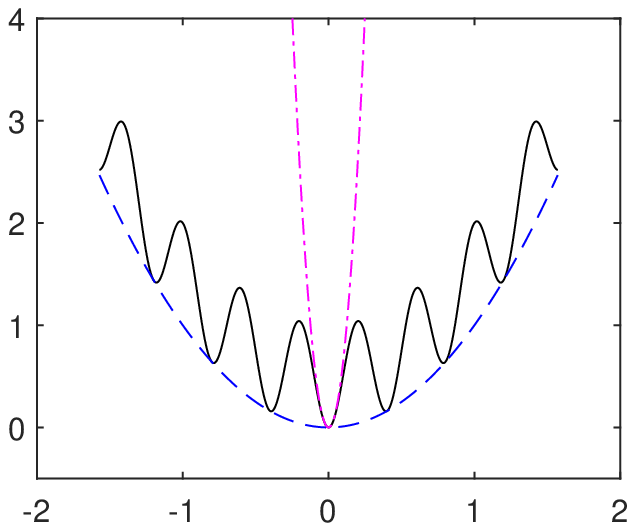}
\caption{One-dimensional examples. Left: the objective $f(x)=x^2+x^2\cos(5\pi x)/2$ (solid) with lower bound $x^2/2$ (dashed) and upper bound $3x^2/2$ (dash-dotted). Right: the objective $f(x)=x^2-\cos(5\pi x)/2+1/2$ (solid) with lower bound $x^2$ (dashed) and upper bound $65x^2$ (dash-dotted).}
\label{DFD:fig:A}
\end{figure}

As shown in Fig. \ref{DFD:fig:A}, such a class of functions can be extended from strongly convex functions with Lipschitz-continuous gradients; however, it is not ruling out the possibility of multiple minima. The lower bound $f_*+\frac{l}{2}\|x-x_*\|_2^2$ guarantees the uniqueness of the global minima while the upper bound $f_*+\frac{L}{2}\|x-x_*\|_2^2$ controls the sharpness of the minima.

Under Assumption \ref{DFD:ass:A}, the Lipschitz-continuous objective $f$ has a unique global minima $x_*$ and possibly multiple local minima. Our goal here is to find this global minima $x_*$ without being trapped in saddle points or local minima. We prove that the FD-DFD method enjoys global linear convergence, i.e., $\|x_{k+1}-x_*\|_2^2=\rho^k\|x_1-x_*\|_2^2$, for finding the global minimizer $x_*$ when the parameters are properly selected (see Theorem \ref{DFD:thm:main2}).

The remainder of the paper is organized as follows. In Sect. \ref{DFD:s2}, we establish the convergence property and complexity bound
for the FD-DFD method. In Sect. \ref{DFD:s3}, we compare the characteristics of the FD-DFD method with the RAD method. In Sect. \ref{DFD:s4}, we demonstrate the benefits of the FD-DFD method by numerical experiments in various dimensions from $5$ to $500$. And finally, we draw conclusions in Sect. \ref{DFD:s5}.

\section{Derivative-free methods}
\label{DFD:s2}

\subsection{Gradient of Gaussian smoothing}

Let random vector $\xi$ have $d$-dimensional standard normal distribution, that is, $\xi\sim\mathcal{N}(0,I_d)$. Denote by $\mathbb{E}_\xi(h(\xi))$ the expectation of corresponding random variable. For any smoothing parameter $\sigma>0$, we consider
\begin{equation*}
  g_\sigma(x,\xi)=
  \frac{\big(f(x+\sigma\xi)-f_*\big)\xi}{\sigma}
\end{equation*}
as an unbiased estimate for the gradient of the smoothed objective function
\begin{equation}\label{DFD:eq:SO}
  f_\sigma(x)=\mathbb{E}_\xi\big[f(x+\sigma\xi)\big].
\end{equation}
In practice, $f_*$ and can be replaced with corresponding estimate.

Using the substitution $\xi=\frac{\theta-x}{\sigma}$, this gradient estimate can also be written as \begin{equation*}
  g_\sigma(x,\theta)=\frac{\big(f(\theta)-f_*\big)(\theta-x)}{\sigma^2},
  ~~~\theta\sim\mathcal{N}(x,\sigma^2I_d).
\end{equation*}
See \cite{NesterovY2017A_GF,PoliakB1987M_Optimization} for other different finite-difference based schemes.

Theorem \ref{SGFD:thm:stepsmoothing} establishes $\nabla f_\sigma(x)= \frac{1}{\sigma}\mathbb{E}_\xi[f(x+\sigma\xi)\xi]= \frac{1}{\sigma^2}\mathbb{E}_\theta[f(\theta)(\theta-x)]$, thus, we obtain the unbiasedness
\begin{equation*}
  \mathbb{E}_\xi[g_\sigma(x,\xi)]=
  \mathbb{E}_\theta[g_\sigma(x,\theta)]=\nabla f_\sigma(x).
\end{equation*}

\begin{theorem}\label{SGFD:thm:stepsmoothing}
Suppose that $f_\sigma:\mathbb{R}^d\to\mathbb{R}$ is the smoothed objective with a smoothing parameter $\sigma>0$ defined by \eqref{DFD:eq:SO}, then its gradient
\begin{equation}\label{DFD:eq:Gsmoothing}
  \nabla f_\sigma(x)=\frac{1}{\sigma}
  \mathbb{E}_\xi[f(x+\sigma\xi)\xi]
  =\frac{1}{\sigma^2}\mathbb{E}_\theta[f(\theta)(\theta-x)],
\end{equation}
where $\xi\sim\mathcal{N}(0,I_d)$ and $\theta\sim\mathcal{N}(x,\sigma^2I_d)$.
\end{theorem}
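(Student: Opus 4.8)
The plan is to compute $\nabla f_\sigma(x)$ directly from the integral representation of the expectation and differentiate under the integral sign. Writing the Gaussian density as $\phi(u) = (2\pi)^{-d/2} e^{-\|u\|_2^2/2}$, we have
\begin{equation*}
  f_\sigma(x) = \int_{\mathbb{R}^d} f(x+\sigma\xi)\,\phi(\xi)\,\ud\xi.
\end{equation*}
First I would substitute $\theta = x + \sigma\xi$, so that $\xi = (\theta-x)/\sigma$ and $\ud\xi = \sigma^{-d}\ud\theta$, giving the alternative form
\begin{equation*}
  f_\sigma(x) = \frac{1}{\sigma^d}\int_{\mathbb{R}^d} f(\theta)\,\phi\!\left(\frac{\theta-x}{\sigma}\right)\ud\theta.
\end{equation*}
Here the dependence on $x$ has been moved entirely into the smooth Gaussian kernel, which is the key technical advantage: even though $f$ is only assumed Lipschitz, the kernel is $C^\infty$, so after justifying the interchange of $\nabla_x$ and the integral we only need to differentiate the exponential.

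Next I would carry out the differentiation. Since $\nabla_x\,\phi\!\left(\frac{\theta-x}{\sigma}\right) = \frac{\theta-x}{\sigma^2}\,\phi\!\left(\frac{\theta-x}{\sigma}\right)$ (the gradient of $-\|\theta-x\|_2^2/(2\sigma^2)$ with respect to $x$ is $(\theta-x)/\sigma^2$), differentiating under the integral yields
\begin{equation*}
  \nabla f_\sigma(x) = \frac{1}{\sigma^d}\int_{\mathbb{R}^d} f(\theta)\,\frac{\theta-x}{\sigma^2}\,\phi\!\left(\frac{\theta-x}{\sigma}\right)\ud\theta = \frac{1}{\sigma^2}\,\mathbb{E}_\theta\big[f(\theta)(\theta-x)\big],
\end{equation*}
which is the second expression in \eqref{DFD:eq:Gsmoothing}. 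Substituting back $\theta = x+\sigma\xi$ turns this into $\frac{1}{\sigma}\mathbb{E}_\xi[f(x+\sigma\xi)\xi]$, the first expression. Alternatively, one can obtain the $\xi$-form directly by differentiating $\phi$ in the original integral after first noting $\nabla_\xi\phi(\xi) = -\xi\,\phi(\xi)$ and integrating by parts in each coordinate; both routes give the same answer, so I would present whichever is cleaner.

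The main obstacle is the rigorous justification of differentiation under the integral sign, since $f$ is not assumed to be differentiable — only Lipschitz (indeed Assumption~\ref{DFD:ass:A} gives the quadratic growth bound $f(\theta) \le f_* + \frac{L}{2}\|\theta-x_*\|_2^2$, which suffices here). The standard tool is the dominated convergence / Leibniz rule for parameter integrals: I would fix $x$ and a bounded neighborhood, bound the integrand's $x$-derivative $\big|f(\theta)\big|\cdot\frac{\|\theta-x\|_2}{\sigma^2}\,\phi\!\left(\frac{\theta-x}{\sigma}\right)$ uniformly for $x$ in that neighborhood by an integrable function of $\theta$ — which is possible because the quadratic growth of $|f(\theta)|$ is dominated by the Gaussian decay of the kernel and all its $x$-derivatives — and then invoke the standard theorem to conclude that $f_\sigma$ is differentiable with the claimed gradient. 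This also incidentally shows $f_\sigma \in C^1$ (in fact $C^\infty$), so the smoothed problem is genuinely smooth even when the original one is not.
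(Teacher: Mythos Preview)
Your proposal is correct and follows essentially the same route as the paper: rewrite $f_\sigma(x)$ as a convolution against the Gaussian kernel in the variable $\theta$, differentiate the kernel (not $f$) under the integral to obtain the $(\theta-x)/\sigma^2$ factor, and then change variables back to $\xi$. The only difference is that you supply an explicit dominated-convergence justification for the interchange of $\nabla_x$ and the integral, which the paper's proof omits; this is an improvement rather than a departure.
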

\begin{proof}
Using the substitution $\xi=\frac{\theta-x}{\sigma}$, we obtain
\begin{align*}
  f_\sigma(x)=\mathbb{E}_\xi\big[f(x+\sigma\xi)\big]
  =\frac{1}{(\sqrt{2\pi}\sigma)^d}\int_{\mathbb{R}^d}
  f(\theta)e^{-\frac{\|\theta-x\|_2^2}{2\sigma^2}}\ud\theta,
\end{align*}
and therefore,
\begin{align*}
  \nabla f_\sigma(x)\!=&\!\frac{1}{(\sqrt{2\pi}\sigma)^d}
  \!\!\int_{\mathbb{R}^d}\!f(\theta)\nabla_x
  e^{-\frac{\|\theta-x\|_2^2}{2\sigma^2}}\!\ud\theta \\
  \!=&\!\frac{1}{(\sqrt{2\pi}\sigma)^d}
  \!\!\int_{\mathbb{R}^d}\!f(\theta)\!
  \left(\!\frac{\theta\!-\!x}{\sigma^2}\!\right)
  e^{-\frac{\|\theta-x\|_2^2}{2\sigma^2}}\!\ud\theta.
\end{align*}
That is, $\nabla f_\sigma(x)=\frac{1}{\sigma^2} \mathbb{E}_\theta[f(\theta)(\theta-x)]$. Using the substitution $\theta=x+\sigma\xi$, we further obtain
\begin{equation*}
  \nabla f_\sigma(x)=\frac{1}{\sigma}\frac{1}{(\sqrt{2\pi})^d}
  \int_{\mathbb{R}^d}f(x+\sigma\xi)\xi
  e^{-\frac{\|\xi\|_2^2}{2}}\ud\xi
  =\frac{1}{\sigma}\mathbb{E}_\xi[f(x+\sigma\xi)\xi],
\end{equation*}
and the proof is complete. \qed
\end{proof}

We will see that the smoothed gradient $\nabla f_\sigma$ plays a key role in the subsequent analysis. At the same time, we also notice that the Gaussian smoothing does not make any significant changes to the bounds of the objective function $f$. Specifically, under Assumption \ref{DFD:ass:A}, we have,
\begin{equation*}
  f_*+\frac{l}{2}\|x-x_*\|_2^2\leqslant f(x)\leqslant
  f_*+\frac{L}{2}\|x-x_*\|_2^2,
\end{equation*}
together with
\begin{align*}
  &\frac{1}{(\sqrt{2\pi}\sigma)^d}\int_{\mathbb{R}^d}
  \|\theta-x_*\|_2^2e^{-\frac{\|\theta-x\|_2^2}{2\sigma^2}}\ud\theta \\
  =&\frac{1}{(\sqrt{2\pi}\sigma)^d}\sum_{i=1}^d\int_{\mathbb{R}^d}
  \big(\theta^{(i)}-x^{(i)}+x^{(i)}-x_*^{(i)}\big)^2
  e^{-\frac{\|\theta-x\|_2^2}{2\sigma^2}}\ud\theta \\
  =&\frac{1}{(\sqrt{2\pi}\sigma)^d}\sum_{i=1}^d\int_{\mathbb{R}^d}
  \big(\theta^{(i)}-x^{(i)}\big)^2
  e^{-\frac{\|\theta-x\|_2^2}{2\sigma^2}}\ud\theta
  +\sum_{i=1}^d\big(x^{(i)}-x_*^{(i)}\big)^2 \\
  =&d\sigma^2+\|x-x_*\|_2^2,
\end{align*}
we obtain the bounds
\begin{equation*}
  f_*+\frac{l}{2}\|x-x_*\|_2^2+\frac{d\sigma^2l}{2}
  \leqslant f_\sigma(x)\leqslant
  f_*+\frac{L}{2}\|x-x_*\|_2^2+\frac{d\sigma^2L}{2}.
\end{equation*}

\subsection{Algorithms}

With an initial point $x_1$, a stepsize $\alpha>0$, an initial exploration radius $\lambda>0$, a fixed contraction factor $0<\rho<1$ and a number of function evaluations per-iteration $n\in\mathbb{N}$, the FD-DFD method is characterized by the iteration
\begin{equation}\label{DFD:eq:Ixk}
  x_{k+1}=x_k-\alpha g_k,
\end{equation}
where the gradient estimate
\begin{equation}\label{DFD:eq:gk}
  g_k=\frac{1}{n\sigma_k^2}\sum_{i=1}^n
  \bigg(f(\theta_{k,i})-\min_{1\leqslant j\leqslant n}
  f(\theta_{k,j})\bigg)(\theta_{k,i}-x_k),
\end{equation}
here, $\theta_{k,i}\sim\mathcal{N}(x_k,\sigma_k^2I_d)$ and $\sigma_k^2=\rho^{k}\lambda^{-1}$. To increase the stability of the iteration, we recommend to use the gradient estimate
\begin{equation}\label{DFD:eq:stablegk}
  \hat{g}_k=\frac{1}{n\hat{m}_k}\sum_{i=1}^n
  \bigg(f(\theta_{k,i})-\min_{1\leqslant j\leqslant n}
  f(\theta_{k,j})\bigg)(\theta_{k,i}-x_k),
\end{equation}
where
\begin{equation*}
  \hat{m}_k^2=\frac{1}{n}\sum_{i=1}^n \bigg(f(\theta_{k,i})-\min_{1\leqslant j\leqslant n}
  f(\theta_{k,j})\bigg)^2=\frac{1}{n}\sum_{i=1}^n
  \bigg(f(x_k+\sigma_k\xi_i)-\min_{1\leqslant j\leqslant n}
  f(\xi_j)\bigg)^2.
\end{equation*}
Here the random vector $\xi$ has $d$-dimensional standard normal distribution.

Note that, if $x_k\to x_*$ as $k\to\infty$, $\hat{m}_k^2$ can be viewed as an estimate for
\begin{equation*}
  m_k^2=\mathbb{E}_\xi\big(f(x_*+\sigma_k\xi)-f_*\big)^2.
\end{equation*}
Under Assumption \ref{DFD:ass:A}, i.e.,
\begin{equation*}
  \frac{l\sigma_k^2}{2}\|\xi\|_2^2\leqslant f(x_*+\sigma_k\xi)-f_*\leqslant
  \frac{L\sigma_k^2}{2}\|\xi\|_2^2,
\end{equation*}
we can obtain $m_k=\mathcal{O}(\sigma_k^2)$; and futher, $\hat{g}_k=\mathcal{O}(g_k)$. The difference between these two iterations based on $g_k$ and $\hat{g}_k$ is shown in Fig. \ref{DFD:fig:B}. We can see that when the stepsize parameter is properly selected, they have almost the same convergence behavior in the early stage while the estimate $\hat{g}_k$ is more stable than the estimate $g_k$ as the iteration progresses. Hence, we mainly consider the gradient estimate $\hat{g}_k$ in the following. Now we present the FD-DFD method as Algorithm \ref{DFD:alg:DFD}.

\begin{figure}[!h]
\centering
\includegraphics[width=0.325\textwidth]{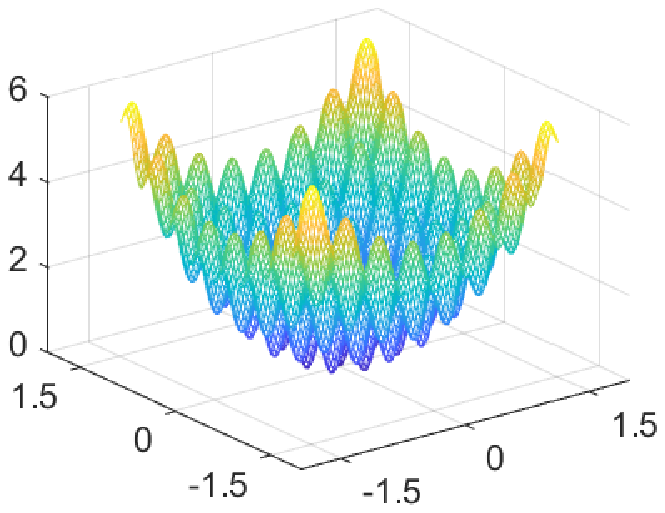}
\includegraphics[width=0.325\textwidth]{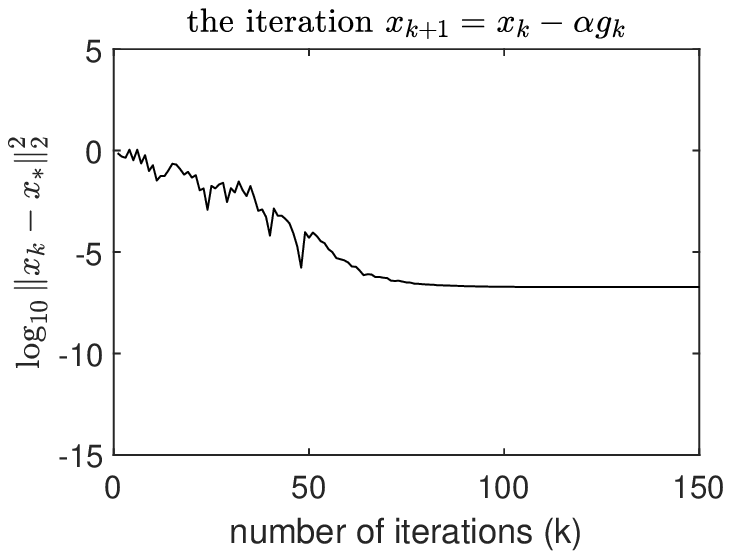}
\includegraphics[width=0.325\textwidth]{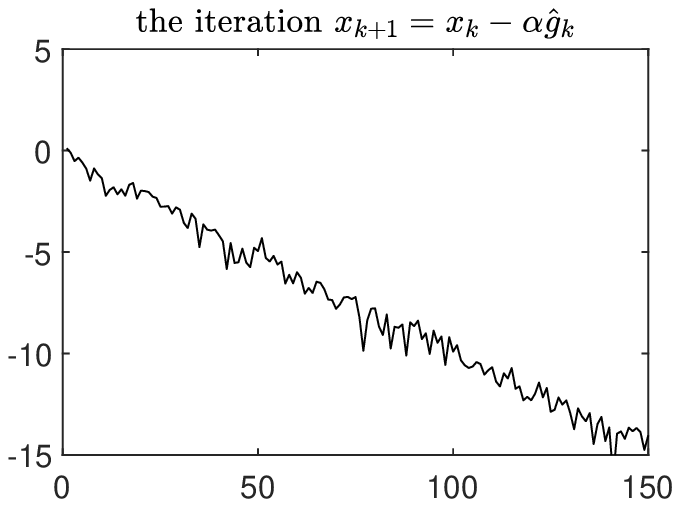}
\caption{Difference between two iterations. Left: the objective $f(x)=\|x\|_2^2-\frac{1}{2}\sum_{i=1}^2 \cos\big(5\pi x^{(i)}\big)+1,~ x\in\mathbb{R}^2$. Middle: the iteration $x_{k+1}=x_k-\alpha g_k$ with parameter settings $x_1=(1,-1)$, $\lambda=1/\sqrt{2}$, $\rho=0.9$, $n=5$ and $\alpha=0.5$. Right: the iteration $x_{k+1}=x_k-\alpha\hat{g}_k$ with parameter settings $x_1=(1,-1)$, $\lambda=1/\sqrt{2}$, $\rho=0.9$, $n=5$ and $\alpha=0.5$.}
\label{DFD:fig:B}
\end{figure}

\begin{algorithm}
\caption{FD-DFD Method}
\label{DFD:alg:DFD}
\begin{algorithmic}[1]
\STATE{Choose an initial iterate $x_1$ and preset parameters $\alpha>0$, $\lambda>0$, $\rho\in(0,1)$, $n\in\mathbb{N}$.}
\FOR{$k=1,2,\cdots$}
\STATE{Set the variance $\sigma_k=\rho^{\frac{k}{2}} \lambda^{-\frac{1}{2}}$.}
\STATE{Generate $n$ realizations $\{\theta_{k,i}\}_i^n$ of $\theta_k$ from $\mathcal{N}(x_k,\sigma_k^2I_d)$.}
\STATE{Compute a stochastic vector $\hat{g}_k$.}
\STATE{Set the new iterate as $x_{k+1}=x_k-\alpha\hat{g}_k$.}
\ENDFOR
\end{algorithmic}
\end{algorithm}

It follows from Chebyshev's inequality and $\mathbb{E}[g_k]=\nabla f_{\sigma_k}(x_k)$ that, for all $C>0$ and any $1\leqslant i\leqslant d$, with probability at least $1-\frac{1}{C^2}$, the $i$-th component of $g_k$ satisfies
\begin{equation}\label{DFD:eq:gki}
  \bigg|-g_k^{(i)}+\nabla^{(i)}f_{\sigma_k}(x_k)\bigg|\leqslant
  C\sqrt{\mathbb{V}\big[g_k^{(i)}\big]}.
\end{equation}
Furthermore, the $i$-th component of $x_{k+1}$ satisfies
\begin{equation}\label{DFD:eq:xki}
  x_{k+1}^{(i)}=x_k^{(i)}-\alpha g_k^{(i)}
  =x_k^{(i)}-\alpha\nabla^{(i)}f_{\sigma_k}(x_k).
\end{equation}

\subsection{Analyses}

To establish the expected linear convergence of the FD-DFD method, we first build upper bounds for $-\nabla^{(i)}f_{\sigma_k}(x_k)$ and $\mathbb{V}\big[g_k^{(i)}\big]$ by the following two lemmas.

\begin{lemma}\label{DFD:lem:gkbound}
Under Assumption \ref{DFD:ass:A}, suppose that $\sigma_k=\rho^{\frac{k}{2}}\lambda^{-\frac{1}{2}}$ with $0<\rho<1$ and there is an $M>0$ such that $\|x_k-x_*\|_2^2\leqslant\rho^kM$. Then for any $1\leqslant i\leqslant d$, the $i$-th component of $\nabla f_{\sigma_k}(x_k)$ satisfies the following inequality:
\begin{equation*}
  \bigg|-\nabla^{(i)}f_{\sigma_k}(x_k)
  +\frac{L+l}{2}\big(x_k^{(i)}-x_*^{(i)}\big)\bigg|
  \leqslant\frac{\rho^{\frac{k}{2}}(L-l)}{\sqrt{2\pi}}
  \left((d+2)\lambda^{-\frac{1}{2}}
  +\frac{2M}{\lambda^{-\frac{1}{2}}}\right);
\end{equation*}
further, when $\lambda^{-1}=\frac{2M}{d+2}$, this inequality can be improved as
\begin{equation*}
  \bigg|-\nabla^{(i)}f_{\sigma_k}(x_k)
  +\frac{L+l}{2}\big(x_k^{(i)}-x_*^{(i)}\big)\bigg|
  \leqslant\rho^{\frac{k}{2}}(L-l)\frac{\sqrt{(d+2)M}}{\sqrt{\pi}},
\end{equation*}
where $x^{(i)}$ be the $i$-th component of $x\in\mathbb{R}^d$.
\end{lemma}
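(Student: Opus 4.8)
The plan is to compare $\nabla f_{\sigma_k}(x_k)$ componentwise with $\nabla\psi_{\sigma_k}(x_k)$, where $\psi(x):=f_*+\frac{L+l}{4}\|x-x_*\|_2^2$ is the average of the lower and upper bounds in \eqref{DFD:eq:A}, and then to control the discrepancy via Assumption \ref{DFD:ass:A} and a few elementary Gaussian moments. Because $\psi$ is quadratic, $\psi_{\sigma_k}(x)=f_*+\frac{L+l}{4}\big(\|x-x_*\|_2^2+d\sigma_k^2\big)$, hence $\nabla^{(i)}\psi_{\sigma_k}(x)=\frac{L+l}{2}\big(x^{(i)}-x_*^{(i)}\big)$ exactly. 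Applying the identity of Theorem \ref{SGFD:thm:stepsmoothing} to the function $f-\psi$ and using linearity of the Gaussian smoothing, one gets $\nabla^{(i)}f_{\sigma_k}(x_k)-\frac{L+l}{2}\big(x_k^{(i)}-x_*^{(i)}\big)=\nabla^{(i)}(f-\psi)_{\sigma_k}(x_k)=\frac{1}{\sigma_k}\mathbb{E}_\xi\big[\big(f(x_k+\sigma_k\xi)-\psi(x_k+\sigma_k\xi)\big)\xi^{(i)}\big]$, while \eqref{DFD:eq:A} yields $|f(x)-\psi(x)|\leqslant\frac{L-l}{4}\|x-x_*\|_2^2$. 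Therefore
\begin{equation*}
  \bigg|-\nabla^{(i)}f_{\sigma_k}(x_k)+\frac{L+l}{2}\big(x_k^{(i)}-x_*^{(i)}\big)\bigg|
  \leqslant\frac{L-l}{4\sigma_k}\,\mathbb{E}_\xi\big[\|x_k-x_*+\sigma_k\xi\|_2^2\,|\xi^{(i)}|\big].
\end{equation*}

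The main step is to evaluate the right-hand expectation. Expanding $\|x_k-x_*+\sigma_k\xi\|_2^2$, the cross term contributes $2\sigma_k\sum_j\big(x_k^{(j)}-x_*^{(j)}\big)\mathbb{E}_\xi\big[\xi^{(j)}|\xi^{(i)}|\big]=0$, since $\mathbb{E}_\xi\big[\xi^{(j)}|\xi^{(i)}|\big]$ vanishes for $j\neq i$ by independence and for $j=i$ by oddness; together with $\mathbb{E}_\xi|\xi^{(i)}|=\sqrt{2/\pi}$ and $\mathbb{E}_\xi|\xi^{(i)}|^3=2\sqrt{2/\pi}$, so that $\mathbb{E}_\xi\big[\|\xi\|_2^2|\xi^{(i)}|\big]=(d-1)\sqrt{2/\pi}+2\sqrt{2/\pi}=(d+1)\sqrt{2/\pi}$, the expectation equals $\sqrt{2/\pi}\,\big(\|x_k-x_*\|_2^2+(d+1)\sigma_k^2\big)$. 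Using $\sqrt{2/\pi}/4=1/(2\sqrt{2\pi})$, this gives the master inequality
\begin{equation*}
  \bigg|-\nabla^{(i)}f_{\sigma_k}(x_k)+\frac{L+l}{2}\big(x_k^{(i)}-x_*^{(i)}\big)\bigg|
  \leqslant\frac{L-l}{2\sqrt{2\pi}\,\sigma_k}\big(\|x_k-x_*\|_2^2+(d+1)\sigma_k^2\big).
\end{equation*}

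It then remains to substitute $\sigma_k^2=\rho^k\lambda^{-1}$ and the hypothesis $\|x_k-x_*\|_2^2\leqslant\rho^kM$. Since $\|x_k-x_*\|_2^2/\sigma_k\leqslant\rho^{k/2}M\lambda^{1/2}$ and $(d+1)\sigma_k=(d+1)\rho^{k/2}\lambda^{-1/2}$, the right-hand side is at most $\frac{(L-l)\rho^{k/2}}{2\sqrt{2\pi}}\big((d+1)\lambda^{-1/2}+M\lambda^{1/2}\big)$, and the first displayed bound follows from the crude estimates $\tfrac12(d+1)\leqslant d+2$ and $\tfrac12 M\leqslant 2M$. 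For the improved bound, set $\lambda^{-1}=\frac{2M}{d+2}$; a short computation gives $(d+1)\lambda^{-1/2}+M\lambda^{1/2}=\frac{3d+4}{\sqrt2}\sqrt{\tfrac{M}{d+2}}$, which is $\leqslant 2\sqrt{2M(d+2)}$ because $3d+4\leqslant4(d+2)$, whereupon the right-hand side collapses to $\rho^{k/2}(L-l)\sqrt{(d+2)M}/\sqrt\pi$. The only delicate point is the main step — noticing that the cross term vanishes and correctly evaluating $\mathbb{E}_\xi|\xi^{(i)}|^3=2\sqrt{2/\pi}$; the remaining arithmetic is routine.
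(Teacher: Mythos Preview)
Your argument is correct and actually sharper than the paper's. The two proofs take genuinely different routes. The paper works directly with the representation $\nabla^{(i)}f_{\sigma_k}(x_k)=\frac{1}{\sigma_k^2}\int(f-f_*)\,(x^{(i)}-x_k^{(i)})\varphi_k(x)\,\ud x$, splits the domain according to the sign of $x^{(i)}-x_k^{(i)}$, applies the lower bound $l$ on the half where the weight is positive and the upper bound $L$ on the other half, and then evaluates the resulting half-space Gaussian integrals of $\|x-x_*\|_2^2\,|x^{(i)}-x_k^{(i)}|$ coordinate by coordinate; the linear term $-(x_k^{(i)}-x_*^{(i)})$ that converts $-l$ into $-\tfrac{L+l}{2}$ emerges only after a somewhat long computation. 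Your approach short-circuits all of this by introducing the midpoint quadratic $\psi(x)=f_*+\tfrac{L+l}{4}\|x-x_*\|_2^2$, for which the smoothed gradient is exactly $\tfrac{L+l}{2}(x_k-x_*)$, and then bounding $|f-\psi|\leqslant\tfrac{L-l}{4}\|x-x_*\|_2^2$. What remains is a single full-space expectation $\mathbb{E}_\xi\big[\|x_k-x_*+\sigma_k\xi\|_2^2\,|\xi^{(i)}|\big]$, whose cross term vanishes by symmetry and whose remaining moments are standard. This yields the constant $(d+1)$ in place of the paper's $(d+2)$ and an overall factor roughly half as large, so your deliberate weakening to reach the stated bounds is legitimate. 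The trade-off: the paper's half-space splitting is more hands-on and makes the origin of each term visible, while your midpoint trick is shorter, coordinate-free in spirit, and would generalize more readily to other smoothing kernels or sandwich assumptions.
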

\begin{proof}
For convenience we define $\varphi_k(x)=\prod_{i=1}^d\phi_k(x^{(i)})$, where
\begin{equation*}
  \phi_k(x^{(i)})=\frac{1}{\sqrt{2\pi}\sigma_k}
  \exp\bigg(\frac{-(x^{(i)}-x_k^{(i)})^2}{2\sigma_k^2}\bigg).
\end{equation*}
According to Theorem \ref{SGFD:thm:stepsmoothing}, for any $1\leqslant i\leqslant d$, we have
\begin{equation*}
  \nabla^{(i)}f_{\sigma_k}(x_k)
  =\frac{1}{\sigma_k^2}\int_{\mathbb{R}^d}f(x)
  \big(x^{(i)}-x_k^{(i)}\big)\varphi_k(x)\ud x.
\end{equation*}
Noting that
\begin{equation*}
  \int_{\mathbb{R}^d}\big(x^{(i)}-x_k^{(i)}\big)\varphi_k(x)\ud x=0,
\end{equation*}
it follows that
\begin{align*}
  \nabla^{(i)}f_{\sigma_k}(x_k)
  =&\frac{1}{\sigma_k^2}\int_{\mathbb{R}^d}\big(f(x)-f_*\big)
  \big(x^{(i)}-x_k^{(i)}\big)\varphi_k(x)\ud x \\
  =&\frac{1}{\sigma_k^2}\int_{x_k^{(i)}}^\infty\int_{\mathbb{R}^{d-1}}
  \big(f(x)-f_*\big)\big(x^{(i)}-x_k^{(i)}\big)
  \varphi_k(x)\ud x^{(-i)}\ud x^{(i)} \\
  &-\frac{1}{\sigma_k^2}\int_{-\infty}^{x_k^{(i)}}\int_{\mathbb{R}^{d-1}}
  \big(f(x)-f_*\big)\big(x_k^{(i)}-x^{(i)}\big)
  \varphi_k(x)\ud x^{(-i)}\ud x^{(i)},
\end{align*}
where $\ud x^{(-i)}=\ud x/\ud x^{(i)}$; together with Assumption \ref{DFD:ass:A}, i.e.,
\begin{equation*}
  \frac{l}{2}\|x-x_*\|_2^2\leqslant
  f(x)-f_*\leqslant\frac{L}{2}\|x-x_*\|_2^2,
\end{equation*}
this yields
\begin{align*}
  \nabla^{(i)}f_{\sigma_k}(x_k)
  \geqslant&\frac{l}{2\sigma_k^2}\int_{x_k^{(i)}}^\infty
  \int_{\mathbb{R}^{d-1}}\|x-x_*\|_2^2\big(x^{(i)}-x_k^{(i)}\big)
  \varphi_k(x)\ud x^{(-i)}\ud x^{(i)} \\
  &-\frac{L}{2\sigma_k^2}\int_{-\infty}^{x_k^{(i)}}
  \int_{\mathbb{R}^{d-1}}\|x-x_*\|_2^2
  \big(x_k^{(i)}-x^{(i)}\big)\varphi_k(x)\ud x^{(-i)}\ud x^{(i)} \\
  =&\frac{l}{2\sigma_k^2}\int_{\mathbb{R}^d}\|x-x_*\|_2^2
  \big(x^{(i)}-x_k^{(i)}\big)\varphi_k(x)\ud x \\
  &-\frac{L-l}{2\sigma_k^2}\int_{-\infty}^{x_k^{(i)}}
  \int_{\mathbb{R}^{d-1}}\|x-x_*\|_2^2
  \big(x_k^{(i)}-x^{(i)}\big)\varphi_k(x)\ud x^{(-i)}\ud x^{(i)}.
\end{align*}
Similarly, since
\begin{equation*}
  \int_{\mathbb{R}^d}\big(x^{(i)}-x_k^{(i)}\big)\varphi_k(x)\ud x=
  \int_{\mathbb{R}^d}\big(x^{(i)}-x_k^{(i)}\big)^3\varphi_k(x)\ud x=0,
\end{equation*}
it holds that
\begin{align*}
  &\frac{l}{2\sigma_k^2}\int_{\mathbb{R}^d}\|x-x_*\|_2^2 \big(x^{(i)}-x_k^{(i)}\big)\varphi_k(x)\ud x \\
  =&\frac{l}{2\sigma_k^2}\int_{\mathbb{R}^d}\big(x^{(i)}-x_*^{(i)}\big)^2
  \big(x^{(i)}-x_k^{(i)}\big)\varphi_k(x)\ud x \\
  =&\frac{l}{2\sigma_k^2}\!\int_\mathbb{R}\!\big[
  \big(x^{(i)}\!\!-\!x_k^{(i)}\big)^3\!\!+\!2
  \big(x_k^{(i)}\!\!-\!x_*^{(i)}\big)
  \big(x^{(i)}\!\!-\!x_k^{(i)}\big)^2\!\!+\!
  \big(x_k^{(i)}\!\!-\!x_*^{(i)}\big)^2\big(x^{(i)}\!\!-\!x_k^{(i)}\big)
  \big]\phi_k(x^{(i)})\ud x^{(i)} \\
  =&l\big(x_k^{(i)}-x_*^{(i)}\big).
\end{align*}
Hence, we obtain
\begin{equation*}
  -\nabla^{(i)}f_{\sigma_k}(x_k)\leqslant
  -l\big(x_k^{(i)}\!-\!x_*^{(i)}\big)+
  \frac{L-l}{2\sigma_k^2}\int_{-\infty}^{x_k^{(i)}}\!
  \int_{\mathbb{R}^{d-1}}\!\|x\!-\!x_*\|_2^2
  \big(x_k^{(i)}\!-\!x^{(i)}\big)\varphi_k(x)\ud x^{(-i)}\ud x^{(i)}.
\end{equation*}
Further, noting that
\begin{align*}
  &\int_\mathbb{R}\big(x^{(j)}-x_*^{(j)}\big)^2
  \phi_k(x^{(j)})\ud x^{(j)} \\
  =&\int_\mathbb{R}\big(x^{(j)}-x_k^{(j)}+x_k^{(j)}-x_*^{(j)}\big)^2
  \phi_k(x^{(j)})\ud x^{(j)} \\
  =&\int_\mathbb{R}\big(x^{(j)}-x_k^{(j)}\big)^2
  \phi_k(x^{(j)})\ud x^{(j)}
  +2\big(x_k^{(j)}-x_*^{(j)}\big)\int_\mathbb{R}
  \big(x^{(j)}-x_k^{(j)}\big)\phi_k(x^{(j)})\ud x^{(j)} \\
  &+\big(x_k^{(j)}-x_*^{(j)}\big)^2\int_\mathbb{R}
  \phi_k(x^{(j)})\ud x^{(j)}\\
  =&\sigma_k^2+\big(x_k^{(j)}-x_*^{(j)}\big)^2,
\end{align*}
it follows that
\begin{align*}
  &\frac{1}{\sigma_k^2}\sum_{j\neq i}\int_{-\infty}^{x_k^{(i)}}
  \int_{\mathbb{R}^{d-1}}\big(x^{(j)}-x_*^{(j)}\big)^2
  \big(x_k^{(i)}-x^{(i)}\big)\varphi_k(x)\ud x^{(-i)}\ud x^{(i)} \\
  =&\frac{1}{\sigma_k^2}\int_{-\infty}^{x_k^{(i)}}
  \big(x_k^{(i)}-x^{(i)}\big)\phi_k(x^{(i)})\ud x^{(i)}
  \cdot\sum_{j\neq i}
  \left[\sigma_k^2+\big(x_k^{(j)}-x_*^{(j)}\big)^2\right] \\
  \leqslant&\frac{1}{\sqrt{2\pi}\sigma_k}
  \left(\sigma_k^2d+\|x_k-x_*\|_2^2\right).
\end{align*}
Hence, it follows from $\sigma_k=\rho^{\frac{k}{2}}\lambda^{-\frac{1}{2}}$ and $\|x_k-x_*\|_2^2\leqslant\rho^kM$ that
\begin{equation*}
  \frac{1}{\sigma_k^2}\sum_{j\neq i}\int_{-\infty}^{x_k^{(i)}}
  \int_{\mathbb{R}^{d-1}}\big(x^{(j)}-x_*^{(j)}\big)^2
  \big(x_k^{(i)}-x^{(i)}\big)\varphi_k(x)\ud x^{(-i)}\ud x^{(i)}
  \leqslant\frac{\rho^{\frac{k}{2}}}{\sqrt{2\pi}}
  \left(\lambda^{-\frac{1}{2}}d+\frac{M}{\lambda^{-\frac{1}{2}}}\right).
\end{equation*}
Similarly, we have
\begin{align*}
  &\frac{1}{\sigma_k^2}\int_{-\infty}^{x_k^{(i)}}
  \int_{\mathbb{R}^{d-1}}\big(x^{(i)}-x_*^{(i)}\big)^2
  \big(x_k^{(i)}-x^{(i)}\big)\varphi_k(x)\ud x^{(-i)}\ud x^{(i)} \\
  =&\frac{1}{\sigma_k^2}\int_{-\infty}^{x_k^{(i)}}
  \big(x^{(i)}-x_*^{(i)}\big)^2\big(x_k^{(i)}-x^{(i)}\big)
  \phi_k(x^{(i)})\ud x^{(i)} \\
  =&\frac{1}{\sigma_k^2}\int_{-\infty}^{x_k^{(i)}}
  \big(x^{(i)}-x_k^{(i)}+x_k^{(i)}-x_*^{(i)}\big)^2
  \big(x_k^{(i)}-x^{(i)}\big)\phi_k(x^{(i)})\ud x^{(i)} \\
  =&\frac{1}{\sigma_k^2}\int_{-\infty}^{x_k^{(i)}}
  \big(x_k^{(i)}-x^{(i)}\big)^3\phi_k(x^{(i)})\ud x^{(i)}
  -2\frac{x_k^{(i)}-x_*^{(i)}}{\sigma_k^2}\int_{-\infty}^{x_k^{(i)}}
  \big(x_k^{(i)}-x^{(i)}\big)^2\phi_k(x^{(i)})\ud x^{(i)} \\
  &+\frac{\big(x_k^{(i)}-x_*^{(i)}\big)^2}{\sigma_k^2}
  \int_{-\infty}^{x_k^{(i)}}
  \big(x_k^{(i)}-x^{(i)}\big)\phi_k(x^{(i)})\ud x^{(i)} \\
  =&\frac{2\sigma_k}{\sqrt{2\pi}}-\big(x_k^{(i)}-x_*^{(i)}\big)
  +\frac{1}{\sqrt{2\pi}\sigma_k}\big(x_k^{(i)}-x_*^{(i)}\big)^2 \\
  \leqslant&-\big(x_k^{(i)}-x_*^{(i)}\big)
  +\frac{\rho^{\frac{k}{2}}}{\sqrt{2\pi}}
  \left(2\lambda^{-\frac{1}{2}}+\frac{M}{\lambda^{-\frac{1}{2}}}\right).
\end{align*}
Finally, we obtain
\begin{align*}
  -\nabla^{(i)}f_{\sigma_k}(x_k)
  \leqslant&-\frac{L+l}{2}\big(x_k^{(i)}-x_*^{(i)}\big)
  +\frac{\rho^{\frac{k}{2}}(L-l)}{\sqrt{2\pi}}
  \left((d+2)\lambda^{-\frac{1}{2}}
  +\frac{2M}{\lambda^{-\frac{1}{2}}}\right) \\
  \leqslant&-\frac{L+l}{2}\big(x_k^{(i)}-x_*^{(i)}\big)
  +\rho^{\frac{k}{2}}(L-l)\frac{\sqrt{(d+2)M}}{\sqrt{\pi}},
\end{align*}
where $\lambda^{-1}=\frac{2M}{d+2}$.

Similarly, we ca prove that
\begin{align*}
  -\nabla^{(i)}f_{\sigma_k}(x_k)
  \geqslant-\frac{L+l}{2}\big(x_k^{(i)}-x_*^{(i)}\big)
  -\rho^{\frac{k}{2}}(L-l)\frac{\sqrt{(d+2)M}}{\sqrt{\pi}},
\end{align*}
and the proof is complete.\qed
\end{proof}

\begin{lemma}\label{DFD:lem:Vkbound}
Under Assumption \ref{DFD:ass:A}, suppose that $\sigma_k=\rho^{\frac{k}{2}}\lambda^{-\frac{1}{2}}$ with $0<\rho<1$ and there is an $M>0$ such that $\|x_k-x_*\|_2^2\leqslant\rho^kM$. Then for any $1\leqslant i\leqslant d$, the variance of $g_k^{(i)}$ satisfies the following inequality:
\begin{equation*}
  \mathbb{V}\big[g_k^{(i)}\big]\leqslant
  \rho^k\frac{L^2}{n}\left(\frac{d+2}{\lambda}+M\right),
\end{equation*}
where $g_k^{(i)}$ is the $i$-th component of $g_k\in\mathbb{R}^d$.
\end{lemma}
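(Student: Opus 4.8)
The plan is to regard $g_k^{(i)}$ as the empirical mean of $n$ identically distributed terms and to bound $\mathbb{V}[g_k^{(i)}]$ by $n^{-1}$ times a single-sample second moment, which I then reduce to a handful of elementary Gaussian moments by means of Assumption~\ref{DFD:ass:A}; substituting $\sigma_k^2=\rho^k\lambda^{-1}$ and the hypothesis $\|x_k-x_*\|_2^2\le\rho^kM$ at the end will produce both the $\rho^k$-dependence and the coefficient $\frac{d+2}{\lambda}+M$.

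Write $g_k^{(i)}=\frac1n\sum_{i'=1}^nW_{i'}$ with $W_{i'}=\sigma_k^{-2}\big(f(\theta_{k,i'})-\min_{1\le j\le n}f(\theta_{k,j})\big)\big(\theta_{k,i'}^{(i)}-x_k^{(i)}\big)$. Since $\min_{1\le j\le n}f(\theta_{k,j})\ge f_*$, Assumption~\ref{DFD:ass:A} gives the pointwise control $0\le f(\theta_{k,i'})-\min_{1\le j\le n}f(\theta_{k,j})\le f(\theta_{k,i'})-f_*\le\frac{L}{2}\|\theta_{k,i'}-x_*\|_2^2$. First I would argue that the minimum, being common to all the summands, does not make them positively correlated, so that $\mathbb{V}[g_k^{(i)}]\le\frac1n\mathbb{E}[W_1^2]$; then I would estimate the single-sample second moment, using the pointwise control to reduce to
\[
  \mathbb{E}[W_1^2]\le\frac{L^2}{4\sigma_k^4}\,\mathbb{E}_\theta\big[\|\theta-x_*\|_2^4\,(\theta^{(i)}-x_k^{(i)})^2\big],\qquad\theta\sim\mathcal{N}(x_k,\sigma_k^2I_d).
\]
Substituting $\theta=x_k+\sigma_k\xi$ with $\xi\sim\mathcal{N}(0,I_d)$ and expanding $\|\theta-x_*\|_2^2=\|x_k-x_*\|_2^2+2\sigma_k\langle x_k-x_*,\xi\rangle+\sigma_k^2\|\xi\|_2^2$, all odd-order moments of $\xi$ vanish, leaving combinations of the elementary moments $\mathbb{E}[(\xi^{(i)})^2]=1$, $\mathbb{E}[\|\xi\|_2^2(\xi^{(i)})^2]=d+2$ and a few others; inserting $\sigma_k^2=\rho^k\lambda^{-1}$ and $\|x_k-x_*\|_2^2\le\rho^kM$ then reorganizes everything into the claimed bound.

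The step I expect to be the crux is the treatment of the shared minimum $\min_{1\le j\le n}f(\theta_{k,j})$. On the one hand, it couples the $n$ summands, so the averaging contributes the factor $n^{-1}$ only after one has checked that $\mathrm{Cov}(W_1,W_2)\le0$. On the other hand — and this is the more delicate point — the minimization acts as a data-dependent centering that must be exploited to keep the dimension count linear: a bound that simply replaces $f(\theta_{k,i'})-\min_{1\le j\le n}f(\theta_{k,j})$ by $f(\theta_{k,i'})-f_*$ in isolation and then squares leaves an estimate growing like $(d+2)(d+4)$ rather than like $d+2$, so one should instead use $f(\theta_{k,i'})-\min_{1\le j\le n}f(\theta_{k,j})=\big(f(\theta_{k,i'})-\min_{j\neq i'}f(\theta_{k,j})\big)_{+}$, where $\min_{j\neq i'}f(\theta_{k,j})$ is independent of $\theta_{k,i'}$, together with Assumption~\ref{DFD:ass:A} applied to both $f(\theta_{k,i'})$ and $\min_{j\neq i'}f(\theta_{k,j})$. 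Once this bookkeeping is arranged, the remaining work consists of Gaussian integrals of exactly the kind already carried out in the proof of Lemma~\ref{DFD:lem:gkbound}.
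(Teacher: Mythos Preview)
Your overall strategy --- bound $\mathbb{V}[g_k^{(i)}]$ by $n^{-1}$ times a single-sample second moment, then reduce via Assumption~\ref{DFD:ass:A} to Gaussian integrals --- is exactly what the paper does. You have, however, put your finger on a genuine defect, and your proposed fix does not cure it.

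The paper obtains the linear factor $d+2$ by writing $f(\theta_{k,s})-f_*\leqslant\frac{L}{2}\|\theta_{k,s}-x_*\|_2$ (first power, not second), splitting by the triangle inequality, and then computing $\mathbb{E}_\xi\big[\|\xi\|_2^2(\xi^{(i)})^2\big]=d+2$. But Assumption~\ref{DFD:ass:A} gives $\frac{L}{2}\|\theta_{k,s}-x_*\|_2^2$, so the paper's argument silently drops a power; done honestly one is forced, as you anticipated, to $\mathbb{E}_\xi\big[\|\xi\|_2^4(\xi^{(i)})^2\big]=(d+2)(d+4)$. Your proposed remedy --- the identity $f(\theta_{k,i'})-\min_{j}f(\theta_{k,j})=\big(f(\theta_{k,i'})-\min_{j\neq i'}f(\theta_{k,j})\big)_+$ together with conditioning on the independent minimum --- is correct as an identity but does not lower the $d$-count: once you bound $(f(\theta_{k,i'})-m)_+\leqslant f(\theta_{k,i'})-f_*\leqslant\frac{L}{2}\|\theta_{k,i'}-x_*\|_2^2$ you are back to the same fourth moment. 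In fact the stated inequality is not correct as written: for $f(x)=\frac{L}{2}\|x\|_2^2$, $x_k=x_*$, and $n$ large (so $\min_j f(\theta_{k,j})\to f_*$), the single-sample second moment equals $\frac{L^2\sigma_k^2}{4}(d+2)(d+4)$, which already exceeds the claimed bound $L^2\sigma_k^2(d+2)$ once $d\geqslant3$. What your argument \emph{does} deliver is a bound of the shape
\[
  \mathbb{V}\big[g_k^{(i)}\big]\;\leqslant\;\rho^k\,\frac{L^2}{4n}\Big(\tfrac{(d+2)(d+4)}{\lambda}+2(d+8)M+M^2\lambda\Big),
\]
which is the honest estimate and is still perfectly adequate for Theorem~\ref{DFD:thm:main1} after adjusting the constant $K_C$. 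So keep your computation; abandon the attempt to rescue the linear $d$-dependence, since it cannot be rescued.
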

\begin{proof}
According to Assumption \ref{DFD:ass:A}, for any $1\leqslant s\leqslant n$, we have
\begin{equation*}
  f(\theta_{k,s})-\min_{1\leqslant j\leqslant n}f(\theta_{k,j})
  \leqslant f(\theta_{k,s})-f_*\leqslant
  \frac{L}{2}\|\theta_{k,s}-x_*\|_2\leqslant
  \frac{L\sigma_k^2}{2}\|\xi_s\|_2+\frac{L}{2}\|x_k-x_*\|_2,
\end{equation*}
then for any $1\leqslant i\leqslant d$, together with the definition of $g_\sigma(x,\xi)$, we further obtain
\begin{align*}
  \mathbb{V}_\xi\big[g_\sigma^{(i)}(x,\xi)\big]
  \leqslant&\mathbb{E}_\xi\big[g_\sigma^{(i)}(x,\xi)\big]^2\\
  =&\mathbb{E}_\xi\bigg[\frac{\big(f(\theta_k)-\min f(\theta_k)\big)
  \xi^{(i)}}{\sigma_k}\bigg]^2 \\
  \leqslant&\mathbb{E}_\xi\bigg[\frac{L\sigma_k}{2}\|\xi_s\|_2\xi^{(i)}
  +\frac{L\|x_k-x_*\|_2}{2\sigma_k}\xi^{(i)}\bigg]^2 \\
  \leqslant&L^2\sigma_k^2\mathbb{E}_\xi
  \big[\|\xi\|_2\xi^{(i)}\big]^2
  +\frac{L^2\|x_k-x_*\|_2^2}{\sigma_k^2}
  \mathbb{E}_\xi\big[\xi^{(i)}\big]^2 \\
  \leqslant&L^2\rho^k\lambda^{-1}\mathbb{E}_\xi
  \big[\|\xi\|_2\xi^{(i)}\big]^2+L^2\rho^kM.
\end{align*}
Since
\begin{equation*}
  \mathbb{E}_\xi\big[\|\xi\|_2\xi^{(i)}\big]^2
  =\sum_{j=1}^d\frac{1}{(\sqrt{2\pi})^d}
  \int_{\mathbb{R}^d}\big(\xi^{(j)}\xi^{(i)}\big)^2
  e^{-\frac{\|\xi\|_2^2}{2}}\ud\xi=d+2,
\end{equation*}
it follows that
\begin{equation*}
  \mathbb{V}\big[g_k^{(i)}\big]\leqslant
  \rho^k\frac{L^2}{n}\left(\frac{d+2}{\lambda}+M\right),
\end{equation*}
and the proof is complete.\qed
\end{proof}

\begin{theorem}\label{DFD:thm:main1}
Under Assumption \ref{DFD:ass:A}, suppose that $\sigma_k=\rho^{\frac{k}{2}}\lambda^{-\frac{1}{2}}$ with $0<\rho<1$ and there is an $M>0$ such that $\|x_s-x_*\|_2^2\leqslant\rho^sM$ for all $1\leqslant s\leqslant k$. If the FD-DFD method (Algorithm \ref{DFD:alg:DFD}) is run with a stepsize parameter $\alpha>0$ such that
\begin{equation*}
  \rho_\alpha=2\left[1-\frac{\alpha(L+l)}{2}\right]^2<\rho<1,
\end{equation*}
then with probability at least $1-\frac{1}{C^2}$, the iterates of FD-DFD satisfy:
\begin{equation*}
  \big\|x_{k+1}-x_*\big\|_2^2\leqslant
  \rho_\alpha^k\big\|x_1\!-\!x_*\big\|_2^2
  +\rho^k\frac{2\rho_\alpha\alpha^2K_C^2}{\rho-\rho_\alpha},
\end{equation*}
where the constant
\begin{equation*}
  K_C=(L-l)\sqrt{\frac{(d+2)M}{\pi}}
  +L\frac{C}{\sqrt{n}}\sqrt{\frac{d+2}{\lambda}+M}
\end{equation*}
is independent of $k$ but depends on $C$.
\end{theorem}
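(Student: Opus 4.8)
The plan is to run a contraction-type recursion on the coordinatewise errors $x_{k+1}^{(i)}-x_*^{(i)}$ and then pass to the $\ell_2$-norm. Starting from the update rule $x_{k+1}^{(i)}=x_k^{(i)}-\alpha g_k^{(i)}$, I would split the increment $-\alpha g_k^{(i)}$ into three parts: a contraction term $-\tfrac{\alpha(L+l)}{2}\bigl(x_k^{(i)}-x_*^{(i)}\bigr)$, a deterministic smoothing-bias term $\alpha\bigl[-\nabla^{(i)}f_{\sigma_k}(x_k)+\tfrac{L+l}{2}(x_k^{(i)}-x_*^{(i)})\bigr]$, and a stochastic fluctuation term $\alpha\bigl[-g_k^{(i)}+\nabla^{(i)}f_{\sigma_k}(x_k)\bigr]$, so that $x_{k+1}^{(i)}-x_*^{(i)}=\bigl(1-\tfrac{\alpha(L+l)}{2}\bigr)(x_k^{(i)}-x_*^{(i)})+\alpha(\text{bias})+\alpha(\text{fluctuation})$. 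This is exactly the decomposition prefigured by \eqref{DFD:eq:xki}--\eqref{DFD:eq:gki}.

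First I would control the bias term by Lemma \ref{DFD:lem:gkbound} in its sharper form (the one matching the first term of $K_C$), which uses the hypothesis $\|x_k-x_*\|_2^2\leqslant\rho^kM$ and gives $\bigl|-\nabla^{(i)}f_{\sigma_k}(x_k)+\tfrac{L+l}{2}(x_k^{(i)}-x_*^{(i)})\bigr|\leqslant\rho^{k/2}(L-l)\sqrt{(d+2)M/\pi}$. Next I would control the fluctuation term by combining Chebyshev's inequality \eqref{DFD:eq:gki} (which rests on the unbiasedness $\mathbb{E}[g_k]=\nabla f_{\sigma_k}(x_k)$ from Theorem \ref{SGFD:thm:stepsmoothing}) with the variance bound of Lemma \ref{DFD:lem:Vkbound}: with probability at least $1-1/C^2$, $\bigl|-g_k^{(i)}+\nabla^{(i)}f_{\sigma_k}(x_k)\bigr|\leqslant C\sqrt{\mathbb{V}[g_k^{(i)}]}\leqslant\rho^{k/2}\tfrac{CL}{\sqrt n}\sqrt{(d+2)/\lambda+M}$. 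Summing the two bounds gives exactly $\rho^{k/2}K_C$, so by the triangle inequality $|x_{k+1}^{(i)}-x_*^{(i)}|\leqslant|1-\tfrac{\alpha(L+l)}{2}|\,|x_k^{(i)}-x_*^{(i)}|+\alpha\rho^{k/2}K_C$.

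Then I would square this estimate with $(a+b)^2\leqslant 2a^2+2b^2$, which is precisely what produces the factor $\rho_\alpha=2(1-\alpha(L+l)/2)^2$, obtaining $(x_{k+1}^{(i)}-x_*^{(i)})^2\leqslant\rho_\alpha(x_k^{(i)}-x_*^{(i)})^2+2\alpha^2\rho^kK_C^2$; summing over $i$ yields the one-step recursion $\|x_{k+1}-x_*\|_2^2\leqslant\rho_\alpha\|x_k-x_*\|_2^2+2\alpha^2\rho^kK_C^2$ (with the dimensional factors kept inside $K_C$). Finally I would unroll this recursion from step $k$ back to step $1$ --- this is where the hypothesis $\|x_s-x_*\|_2^2\leqslant\rho^sM$ is needed for \emph{every} $s\leqslant k$, since the two lemmas must be invoked at each intermediate step --- to get $\|x_{k+1}-x_*\|_2^2\leqslant\rho_\alpha^k\|x_1-x_*\|_2^2+2\alpha^2K_C^2\sum_{j=1}^k\rho_\alpha^{k-j}\rho^j$, and bound the geometric sum by $\sum_{j=1}^k\rho_\alpha^{k-j}\rho^j\leqslant\rho^k\sum_{m\geqslant0}(\rho_\alpha/\rho)^m=\rho^k\tfrac{\rho}{\rho-\rho_\alpha}$, which converges exactly because the stepsize condition forces $\rho_\alpha<\rho$. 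This delivers the claimed $\rho^k$-decaying residual.

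The main obstacle I anticipate is the probabilistic bookkeeping: Chebyshev's inequality only yields $1-1/C^2$ coordinatewise and step-by-step, whereas the recursion threads through all $d$ coordinates and all $k$ iterations, so a literal argument needs a union bound (costing roughly $1-dk/C^2$) or a conditioning on the favorable event; the clean statement ``with probability at least $1-1/C^2$'' tacitly absorbs this. A lesser, bookkeeping-only point is keeping the dimension factors consistent when passing from the coordinatewise lemmas to the $\ell_2$-norm conclusion and to the precise stated form of the residual constant.
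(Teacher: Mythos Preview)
Your proposal is correct and follows essentially the same route as the paper's proof: decompose $-g_k^{(i)}$ into contraction, bias (Lemma~\ref{DFD:lem:gkbound}), and fluctuation (Chebyshev plus Lemma~\ref{DFD:lem:Vkbound}) to get the coordinatewise bound $|x_{k+1}^{(i)}-x_*^{(i)}|\leqslant|1-\alpha(L+l)/2|\,|x_k^{(i)}-x_*^{(i)}|+\alpha\rho^{k/2}K_C$, square via $(a+b)^2\leqslant2a^2+2b^2$, sum over $i$, and unroll the recursion with a geometric sum. The two loose ends you flag --- the missing union bound over coordinates/steps and the dimension factor that should appear when summing the per-coordinate residuals $2\alpha^2\rho^kK_C^2$ over $i=1,\dots,d$ --- are genuine gaps in the paper's own presentation as well, not deviations from it.
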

\begin{proof}
According to \eqref{DFD:eq:gki}, Lemmas \ref{DFD:lem:gkbound} and \ref{DFD:lem:Vkbound}, it holds that for any $1\leqslant i\leqslant d$ and all $C>0$, with probability at least $1-\frac{1}{C^2}$,
\begin{equation*}
  \bigg|-g_k^{(i)}+\frac{L+l}{2}\big(x_k^{(i)}-x_*^{(i)}\big)\bigg|
  \leqslant\rho^{\frac{k}{2}}K_C,
\end{equation*}
Together with \eqref{DFD:eq:xki}, we obtain
\begin{align*}
  \Big|x_{k+1}^{(i)}-x_*^{(i)}\Big|
  =&~\Big|x_k^{(i)}-x_*^{(i)}-\alpha g_k^{(i)}\Big| \\
  =&~\Big|x_k^{(i)}-x_*^{(i)}-\frac{\alpha(L+l)}{2}
  \big(x_k^{(i)}-x_*^{(i)}\big)+\Big(-\alpha g_k^{(i)}
  +\frac{\alpha(L+l)}{2}\big(x_k^{(i)}-x_*^{(i)}\big)\Big)\Big| \\
  \leqslant&~\Big|x_k^{(i)}-x_*^{(i)}-\frac{\alpha(L+l)}{2}
  \big(x_k^{(i)}-x_*^{(i)}\big)\Big|+\rho^{\frac{k}{2}}\alpha K_C \\
  =&\left|1-\frac{\alpha(L+l)}{2}\right|\Big|x_k^{(i)}-x_*^{(i)}\Big|
  +\rho^{\frac{k}{2}}\alpha K_C,
\end{align*}
further, by the Arithmetic Mean Geometric Mean inequality, we have
\begin{align*}
  \big(x_{k+1}^{(i)}-x_*^{(i)}\big)^2
  \leqslant&2\left[1-\frac{\alpha(L+l)}{2}\right]^2
  \big(x_k^{(i)}-x_*^{(i)}\big)^2+\rho^k\cdot2\alpha^2K_C^2 \\
  =&\rho_\alpha\big(x_k^{(i)}-x_*^{(i)}\big)^2
  +\rho^k\cdot2\alpha^2K_C^2.
\end{align*}
Finally, by summing $i$ from $1$ to $d$, one obtains
\begin{equation*}
  \big\|x_{k+1}-x_*\big\|_2^2\leqslant\rho_\alpha
  \big\|x_k-x_*\big\|_2^2+\rho^k\cdot2\alpha^2K_C^2,
\end{equation*}
and doing it recursively, one further obtains,
\begin{align*}
  \big\|x_{k+1}\!-\!x_*\big\|_2^2\leqslant&
  \rho_\alpha^k\big\|x_1\!-\!x_*\big\|_2^2
  +\rho^k\left(1\!+\!\frac{\rho_\alpha}{\rho}\!+\!\cdots\!+\!
  \frac{\rho_\alpha^{k-1}}{\rho^{k-1}}\right)2\alpha^2K_C^2 \\
  \leqslant&\rho_\alpha^k\big\|x_1\!-\!x_*\big\|_2^2
  +\rho^k\frac{2\rho_\alpha\alpha^2K_C^2}{\rho-\rho_\alpha},
\end{align*}
and the proof is complete.\qed
\end{proof}

The following theorem states that when the parameters are properly selected,  the iterates of FD-DFD satisfy $\|x_{k+1}-x_*\|_2^2\leqslant\rho^k\big\|x_1-x_*\big\|_2^2$ for all $k\in\mathbb{N}$ in probability.
\begin{theorem}\label{DFD:thm:main2}
Under Assumption \ref{DFD:ass:A}, suppose that $\sigma_k=\rho^{\frac{k}{2}}\lambda^{-\frac{1}{2}}$ with $0<\rho<1$. If the FD-DFD method (Algorithm \ref{DFD:alg:DFD}) is run with a stepsize parameter $\alpha>0$ such that
\begin{equation*}
  \frac{\rho_\alpha}{\rho}\left[1+
  \frac{2\alpha^2K_C^2}{\rho-\rho_\alpha}\right]<1,
\end{equation*}
then with high probability, the iterates of FD-DFD satisfy for all $k\in\mathbb{N}$:
\begin{equation*}
  \|x_{k+1}-x_*\|_2^2\leqslant\rho^k\big\|x_1-x_*\big\|_2^2,
\end{equation*}
where $\rho_\alpha=2\left[1-\frac{\alpha(L-l)}{2}\right]^2$ and
\begin{equation*}
  K_C=(L-l)\sqrt{\frac{(d+2)\|x_1-x_*\|_2^2}{\pi\rho}}
  +L\frac{C}{\sqrt{n}}\sqrt{\frac{d+2}{\lambda}
  +\frac{\|x_1-x_*\|_2^2}{\rho}}.
\end{equation*}
\end{theorem}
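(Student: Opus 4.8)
The plan is to run an induction on $k$ with Theorem~\ref{DFD:thm:main1} as the one‑step engine. The key preparatory observation is that the constant $K_C$ appearing in the present statement is exactly the constant $K_C$ of Theorem~\ref{DFD:thm:main1} evaluated at the particular choice
$M:=\|x_1-x_*\|_2^2/\rho$
(together with the exploration‑radius choice $\lambda^{-1}=2M/(d+2)$ that yields the sharpened gradient bound in the second half of Lemma~\ref{DFD:lem:gkbound}). So the natural induction hypothesis to carry is
$\|x_s-x_*\|_2^2\leqslant\rho^{s-1}\|x_1-x_*\|_2^2=\rho^s M$ for $1\leqslant s\leqslant k$,
which is precisely the hypothesis ``$\|x_s-x_*\|_2^2\leqslant\rho^s M$ for all $1\leqslant s\leqslant k$'' that Theorem~\ref{DFD:thm:main1} requires in order to be invoked.

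First I would dispose of the degenerate case $x_1=x_*$ (then one may as well assume $x_1\neq x_*$, so that $M>0$), and record the base case $s=1$, where $\|x_1-x_*\|_2^2=\rho M$ holds with equality, so the hypothesis holds for free. For the inductive step, assume it holds through index $k$. Then Theorem~\ref{DFD:thm:main1} applies and gives, on the event on which its per‑iteration Chebyshev estimates hold,
\begin{equation*}
  \|x_{k+1}-x_*\|_2^2\leqslant\rho_\alpha^k\|x_1-x_*\|_2^2+\rho^k\frac{2\rho_\alpha\alpha^2K_C^2}{\rho-\rho_\alpha}.
\end{equation*}
The stepsize condition forces $0<\rho_\alpha<\rho$ (the bracket in it is $\geqslant 1$), so $\rho_\alpha^k=\rho^k(\rho_\alpha/\rho)^k\leqslant\rho^k(\rho_\alpha/\rho)$ for every $k\geqslant1$, whence
\begin{equation*}
  \|x_{k+1}-x_*\|_2^2\leqslant\rho^k\,\frac{\rho_\alpha}{\rho}\Big[\|x_1-x_*\|_2^2+\frac{2\rho\alpha^2K_C^2}{\rho-\rho_\alpha}\Big].
\end{equation*}
It then remains to see that the right‑hand bracket, multiplied by $\rho_\alpha/\rho$, is at most $\|x_1-x_*\|_2^2$; this is exactly what the stepsize hypothesis $\tfrac{\rho_\alpha}{\rho}\big[1+\tfrac{2\alpha^2K_C^2}{\rho-\rho_\alpha}\big]<1$ is designed to guarantee after clearing $\|x_1-x_*\|_2^2=\rho M$ and matching the two terms. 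That yields $\|x_{k+1}-x_*\|_2^2\leqslant\rho^k\|x_1-x_*\|_2^2=\rho^{k+1}M$, which is simultaneously the claimed bound at index $k$ and the reinstated induction hypothesis at $s=k+1$, closing the loop.

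The step I expect to be the main obstacle is the probabilistic bookkeeping hidden in ``with high probability, for all $k\in\mathbb{N}$''. Theorem~\ref{DFD:thm:main1} controls a single iteration only on an event of probability $1-1/C^2$ (and a union over the $d$ coordinates costs a further factor), so a crude union bound over all $k$ is vacuous. The clean remedy is to let the Chebyshev parameter grow slowly with the iteration index, $C\to C_k$ with $\sum_k d/C_k^2<\infty$, and to absorb the resulting slow growth of $K_{C_k}$ into the geometric gap between $\rho_\alpha^k$ and $\rho^k$: since $(\rho_\alpha/\rho)^k\to0$ geometrically while $K_{C_k}$ can be taken to grow only polynomially, the inductive inequality still closes for every $k$ once it closes at $k=1$, now on an event of probability at least $1-\sum_k d/C_k^2$. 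A secondary point to verify carefully is that the constant $K_C$ stated in the theorem is consistent with the $\lambda$‑choice used in Lemma~\ref{DFD:lem:gkbound}, and that the elementary facts $\rho>\rho_\alpha$ and $(\rho_\alpha/\rho)^k\leqslant\rho_\alpha/\rho$ are used in the right places; these are routine but must line up for the constants in the final bracket estimate to match the stated stepsize condition exactly.
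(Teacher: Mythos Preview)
Your proposal is essentially the paper's own argument: set $M=\rho^{-1}\|x_1-x_*\|_2^2$, carry the induction hypothesis $\|x_s-x_*\|_2^2\leqslant\rho^sM$, feed it into Theorem~\ref{DFD:thm:main1}, and use $(\rho_\alpha/\rho)^k\leqslant\rho_\alpha/\rho$ together with the stepsize condition to close the loop. The paper does not address the probabilistic bookkeeping you flag---it simply writes ``with high probability'' and invokes Theorem~\ref{DFD:thm:main1} without any union bound over $k$---so your $C_k\!\uparrow$ remedy is more careful than the original, not a deviation from it.
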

\begin{proof}
Let $M=\rho^{-1}\|x_1-x_*\|_2^2$, that is, $\|x_1-x_*\|_2^2\leqslant\rho M$, which satisfies the condition of Theorem \ref{DFD:thm:main1}. Using Theorem \ref{DFD:thm:main1} and induction, one can deduce
\begin{equation*}
  \big\|x_{k+1}-x_*\big\|_2^2\leqslant
  \rho^{k+1}M\left(\frac{\rho_\alpha^k}{\rho^k}
  +\frac{2\rho_\alpha\alpha^2K_C^2}{\rho(\rho-\rho_\alpha)}\right).
\end{equation*}
Notice that for every $k\in\mathbb{N}$, it follows that
\begin{equation*}
  \frac{\rho_\alpha^k}{\rho^k}
  +\frac{2\rho_\alpha\alpha^2K_C^2}{\rho(\rho-\rho_\alpha)}
  <\frac{\rho_\alpha}{\rho}\left[1+
  \frac{2\alpha^2K_C^2}{\rho-\rho_\alpha}\right]<1,
\end{equation*}
thus, one can finally obtain
\begin{equation*}
  \big\|x_{k+1}-x_*\big\|_2^2\leqslant
  \rho^{k+1}M=\rho^k\big\|x_1-x_*\big\|_2^2,
\end{equation*}
and the proof is complete.\qed
\end{proof}

Since $n$ is independent of $k$, the following total work complexity bound for the FD-DFD method is immediate from Theorem \ref{DFD:thm:main2}.
\begin{corollary}[Complexity bound]
Suppose the conditions of Theorem \ref{DFD:thm:main2} hold. Then the
number of function evaluations of the FD-DFD (Algorithm \ref{DFD:alg:DFD}) required to achieve $\|x_k-x_*\|_2^2\leqslant\epsilon$ is $\mathcal{O}(\log(1/\epsilon))$.
\end{corollary}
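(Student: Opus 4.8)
The plan is to read the iteration count straight off Theorem~\ref{DFD:thm:main2} and then multiply by the (constant) per-iteration cost. First I would record that, under the hypotheses of Theorem~\ref{DFD:thm:main2}, with high probability the iterates obey $\|x_{k+1}-x_*\|_2^2\leqslant\rho^k\|x_1-x_*\|_2^2$ for every $k\in\mathbb{N}$, equivalently $\|x_k-x_*\|_2^2\leqslant\rho^{k-1}\|x_1-x_*\|_2^2$ for $k\geqslant 1$. Hence a sufficient condition for $\|x_k-x_*\|_2^2\leqslant\epsilon$ is $\rho^{k-1}\|x_1-x_*\|_2^2\leqslant\epsilon$, which holds as soon as
\begin{equation*}
  k\geqslant 1+\frac{\log\big(\|x_1-x_*\|_2^2/\epsilon\big)}{\log(1/\rho)}.
\end{equation*}
Since $0<\rho<1$ is fixed, the smallest such integer $k_\epsilon$ satisfies $k_\epsilon=\mathcal{O}(\log(1/\epsilon))$.

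Next I would account for the cost. Each pass of the for-loop in Algorithm~\ref{DFD:alg:DFD} draws $n$ samples $\theta_{k,i}$ and forms $\hat g_k$ (or $g_k$) from the $n$ values $f(\theta_{k,1}),\dots,f(\theta_{k,n})$ together with their minimum; thus exactly $n$ function evaluations are spent per iteration, and $n$ is a preset constant independent of $k$, of $\epsilon$, and of the dimension $d$. Multiplying, the total number of function evaluations needed to reach $\|x_k-x_*\|_2^2\leqslant\epsilon$ is $n\,k_\epsilon=\mathcal{O}(\log(1/\epsilon))$, which is the assertion.

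There is essentially no obstacle here: the corollary is an immediate consequence of the linear rate of Theorem~\ref{DFD:thm:main2} together with the fact that per-iteration work is fixed. The only points worth stating carefully are that the $\mathcal{O}$-constant absorbs $1/\log(1/\rho)$ and the additive term $\log\|x_1-x_*\|_2^2$, both independent of $\epsilon$, and that—since the guarantee of Theorem~\ref{DFD:thm:main2} holds only with high probability—the complexity bound is likewise a high-probability statement.
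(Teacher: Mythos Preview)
Your argument is correct and mirrors the paper's own: the paper simply remarks that the bound is immediate from Theorem~\ref{DFD:thm:main2} since $n$ is independent of $k$, and you have spelled out precisely that computation. Nothing more is needed.
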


\section{Comparison of RAD and FD-DFD}
\label{DFD:s3}

With an initial point $x_1$, three fixed parameters $\lambda>0$, $0<\rho<1$ and $n\in\mathbb{N}$, the RAD method \cite{LuoX2020A_RAD} is characterized by the iteration
\begin{equation}\label{DFD:eq:RAD}
  x_{k+1}=\frac{\sum_{i=1}^n\theta_{k,i} \exp[-m_k^{-1}(f(\theta_{k,i})-f_*)]}{\sum_{i=1}^n \exp[-m_k^{-1}(f(\theta_{k,i})-f_*)]}.
\end{equation}
With an additional stepsize parameter $\alpha>0$, the FD-DFD method is characterized by the iteration
\begin{equation}\label{DFD:eq:DFD}
  x_{k+1}=x_k-\frac{\alpha}{nm_k}\sum_{i=1}^n
  \bigg(f(\theta_{k,i})-\min_{1\leqslant j\leqslant n}
  f(\theta_{k,i})\bigg)(\theta_{k,i}-x_k),
\end{equation}
where $\theta_{k,i}\sim\mathcal{N}(x_k,\rho^{k}\lambda^{-1}I_d)$ and $m_k^2=\mathbb{E}[(f(\theta_{k,i})-f_*)^2]$. In practice, $f_*$ and $m_k$ should be replaced with corresponding estimates.

Both the RAD and FD-DFD method have their own characteristics. Since RAD is based on the asymptotic representation for the solution of regularized minimization, it does not require a stepsize parameter, or in other words, it can automatically obtain the optimal stepsize; however, the parameter $n$ in RAD algorithms increases as the dimension $d$ increases (See Fig. 3 in \cite{LuoX2020A_RAD}). In comparison, the parameter $n$ in FD-DFD algorithms is almost independent of $d$ because the variance term containing $n$ is insignificance for an appropriate $n$ (See Fig. \ref{DFD:fig:E2} and Theorem \ref{DFD:thm:main1}); but as a price, there is an additional stepsize parameter $\alpha$ and the choice of $\alpha$ directly affects whether the iterate sequence will converge to the global minimizer.

\section{Numerical experiments}
\label{DFD:s4}

We illustrate the performance of the FD-DFD algorithm by considering the revised Rastrigin function in $\mathbb{R}^d$ defined as
\begin{equation*}
  f(x)=\|x\|_2^2-\frac{1}{2}\sum_{i=1}^d\cos\big(5\pi x^{(i)}\big)
  +\frac{d}{2},~~\textrm{where $x^{(i)}$ be the $i$-th component of $x$}.
\end{equation*}
As shown in Fig. \ref{DFD:fig:A}, this revised Rastrigin function satisfies Assumption \ref{DFD:ass:A}. It has a unique global minima located at the origin and many local minima, e.g., the number of its local minima reaches $5^d$ in the hypercube $[-1,1]^d$.

Fig. \ref{DFD:fig:E2} shows the performance of FD-DFD algorithms for the revised Rastrigin function in various dimensions from $5$ to $500$. These experiments clearly demonstrate the global linear convergence of the FD-DFD method.

\begin{figure}[tbhp]
\centering
\subfigure{\includegraphics[width=0.325\textwidth]{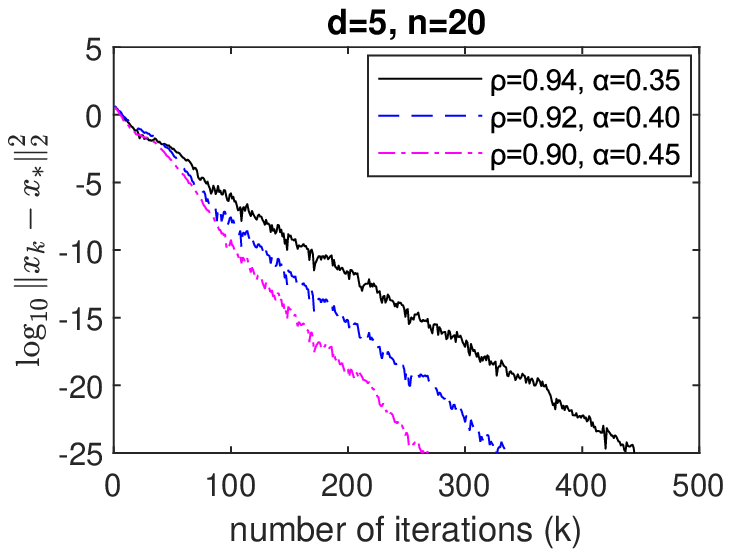}}
\subfigure{\includegraphics[width=0.325\textwidth]{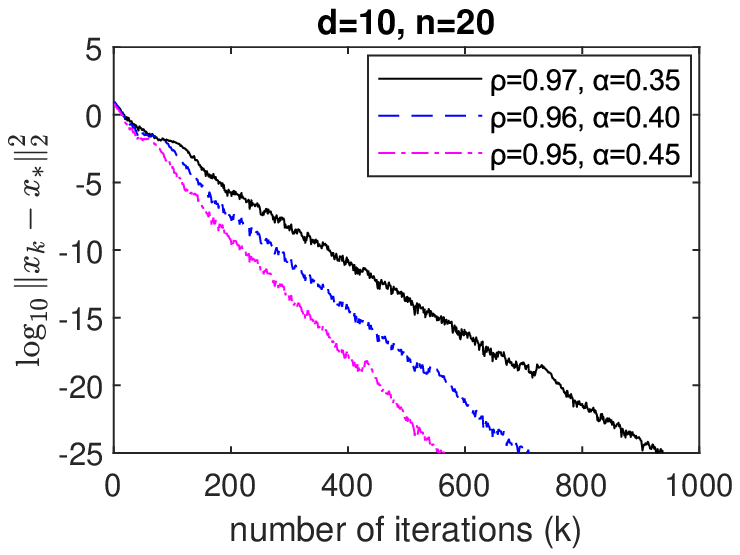}}
\subfigure{\includegraphics[width=0.325\textwidth]{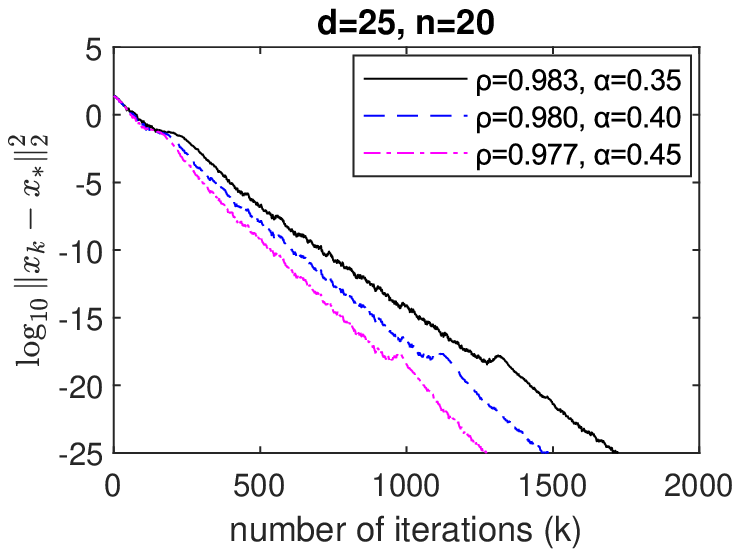}}
\subfigure{\includegraphics[width=0.325\textwidth]{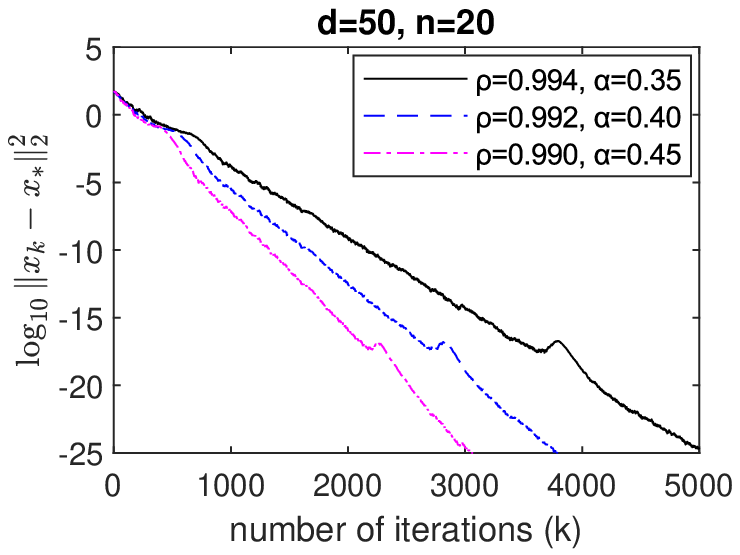}}
\subfigure{\includegraphics[width=0.325\textwidth]{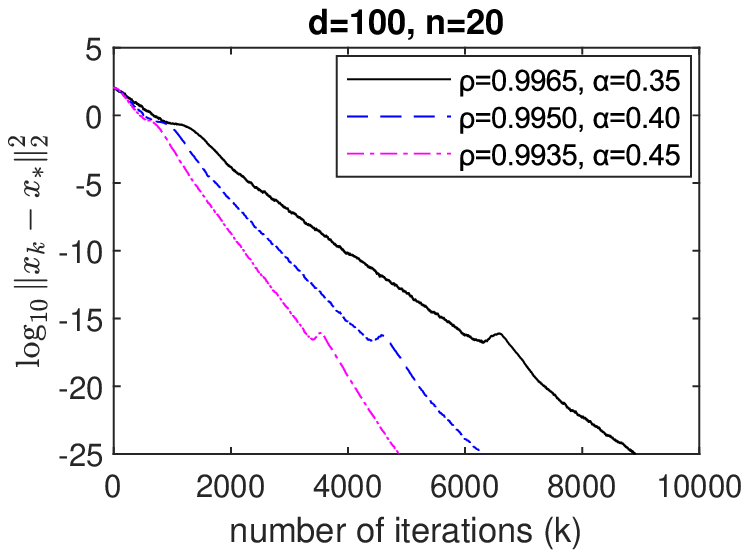}}
\subfigure{\includegraphics[width=0.325\textwidth]{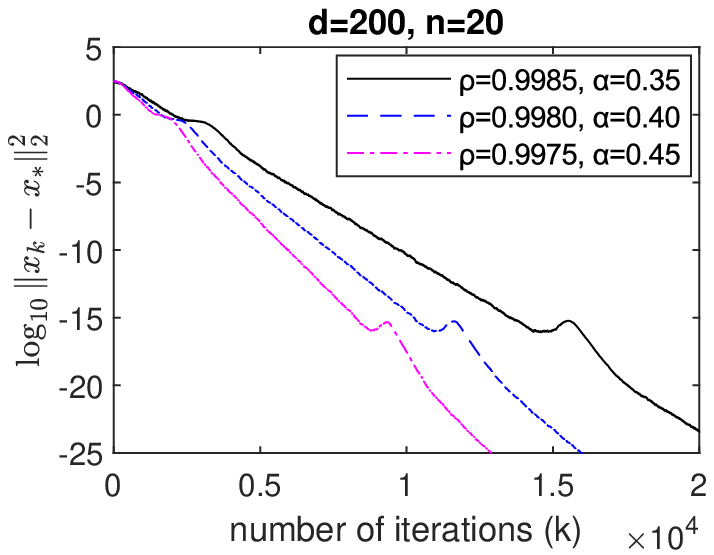}}
\subfigure{\includegraphics[width=0.325\textwidth]{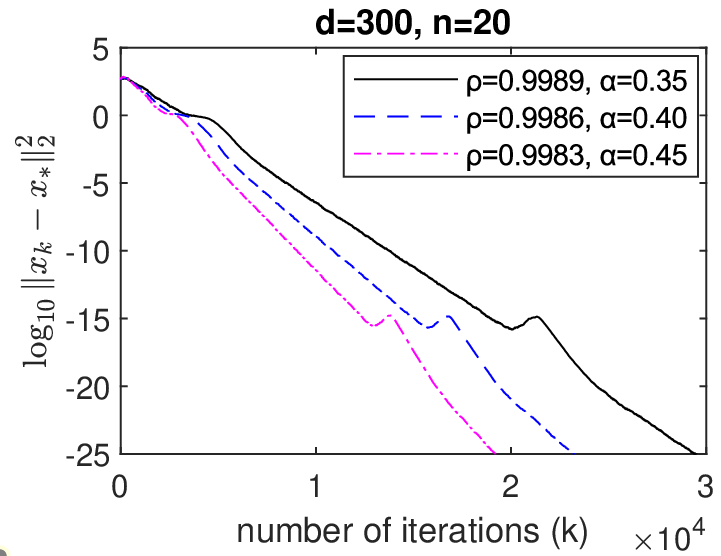}}
\subfigure{\includegraphics[width=0.325\textwidth]{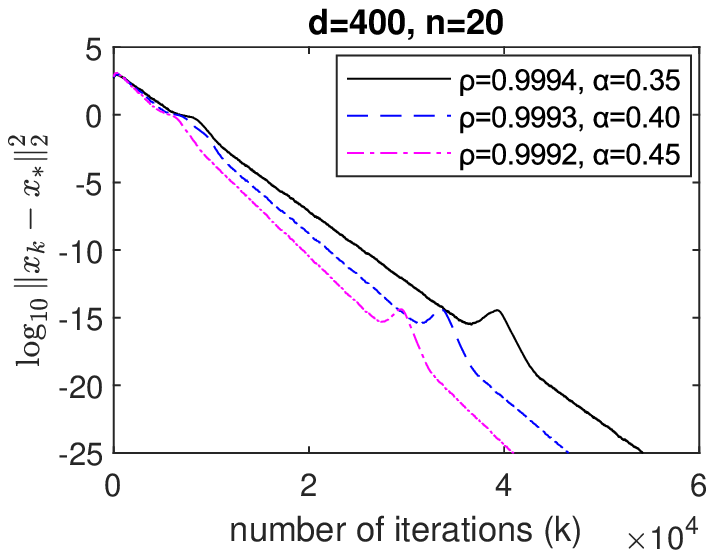}}
\subfigure{\includegraphics[width=0.325\textwidth]{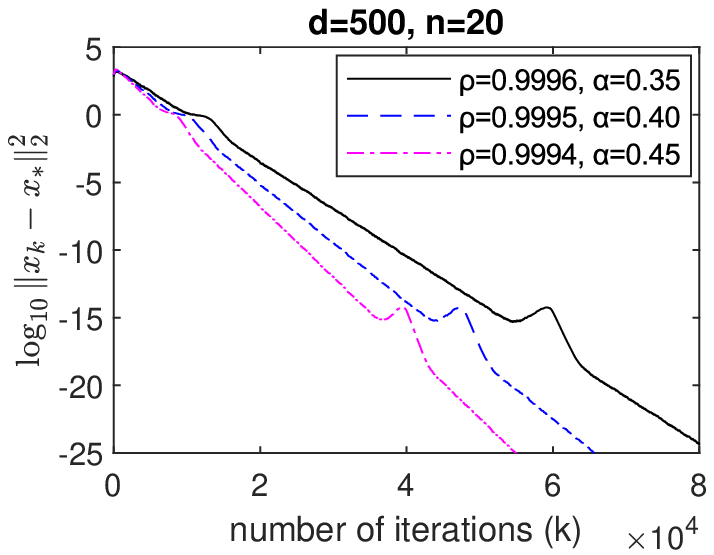}}
\caption{Performance of the FD-DFD method for the revised Rastrigin function in various dimensions, every initial iterate is randomly selected on a sphere of radius $\sqrt{d}$ centered at the origin, the parameter $\lambda=1/\sqrt{d}$, three different settings for the parameters $\rho$ and $\alpha$ are run independently for each plot.}
\label{DFD:fig:E2}
\end{figure}

In these experiments, every initial iterate is randomly selected on a $d$-dimensional sphere of radius $\sqrt{d}$ centered at the origin. Furthermore, the random vectors in each iteration are sequentially generated by a halton sequence with RR$2$ scramble type \cite{KocisL1997A_QMCscramble}. The algorithm is implemented in Matlab. The source code of the implementation is available at https://github.com/xiaopengluo/dfd.

\section{Conclusions}
\label{DFD:s5}

In this work we have analyzed that the finite-difference derivative-free descent (FD-DFD) method enjoys linear convergence for finding the global minima of a class of multiple minima functions. It also has a total work complexity bound $\mathcal{O}(\log\frac{1}{\epsilon})$ to find a point such that the gap between this point and the global minimizer is less than $\epsilon$. Numerical experiments in various dimensions demonstrate all the benefits.

\begin{acknowledgements}
We thank Prof. Herschel A. Rabitz for several discussions about global optimization for multiple minima problems. 
\end{acknowledgements}



\bibliographystyle{spmpsci}
\bibliography{MReferences}


\end{document}